\newtheorem{theorem}{Theorem}
\theoremstyle{plain}
\newtheorem*{acknowledgement}{Acknowledgement}
\newtheorem{corollary}{Corollary}
\newtheorem{lemma}{Lemma}
\newtheorem*{remark}{Remarks}
\numberwithin{equation}{section}
\begin{document}
\title[Common attractive points]{Iterative approximation of common
attractive points of further generalized hybrid mappings}
\author{Safeer Hussain Khan}
\address{Safeer Hussain Khan, Department of Mathematics,Statistics and
Physics, Qatar University, Doha 2713, State of Qatar.}
\email{safeerhussain5@yahoo.com; safeer@qu.edu.qa}
\keywords{Common attractive points, further generalized hybrid mappings,
weak convergence. \\
\textit{2010 Mathematics Subject Classification.} 47H09, 47H10, 47H99}

\begin{abstract}
Our purpose in this paper is (i) to introduce the concept of further
generalized hybrid mappings (ii) to introduce the concept of common
attractive points (CAP) (iii) to write and use Picard-Mann iterative process
for two mappings. We approximate common attractive points of further
generalized hybrid mappings by using iterative process due to Khan \cite{SHK}
generalized to the case of two mappings in Hilbert spaces without closedness
assumption. Our results are generalizations and improvements of several
results in the literature in different ways.
\end{abstract}

\maketitle

\section{Introduction and Preliminaries}

Let $\mathbb{N}$ denote the set of positive integers and $\mathbb{R}$ the
set of real numbers. Let $H$ be a real Hilbert space and $C$ a nonempty
subset of $H$. Let $T$ be a mapping of $C$ into $H$. Recall that the set of
fixed points of $T$ is denoted and defined by $F(T)=\{z\in C:Tz=z\}$.
Takahashi and Takeuchi \cite{TT11} introduced the concept of attractive
points in Hilbert spaces. They defined and denoted the set of attractive
points as follows.%
\begin{equation*}
A(T)=\{z\in H:\left\Vert Tx-z\right\Vert \leq \left\Vert x-z\right\Vert \}\
\ \ \text{for all }x\in C.
\end{equation*}

From this definition, neither an attractive point is a fixed point nor
conversely. However, for a relation between the two, see Lemmas $\ref{ATFT1}$
and $\ref{ATFT2}$ below. Basically this concept was introduced to get rid of
hypothesis of closedness and convexity as used in a well-celebrated
Baillon's nonlinear ergodic theorem in Hilbert spaces \cite{B75}. They also
proved an existence theorem for attractive points without convexity in
Hilbert spaces. In these theorems, they used the so-called generalized
hybrid mappings (to be defined in the sequel) whose class is larger than the
class of nonexpansive mappings used in Baillon's theorem. Since we are
interested in existence theorem, we state it as follows.

\begin{theorem}
\label{Th1}(Takahashi and Takeuchi \cite{TT11}) Let $H$ be a Hilbert space
and $C$ a nonempty subset of $H$. Let $T:C\rightarrow C$ be a generalized
hybrid mapping Then $T$ has an attractive point if and only if $\exists
~z\in C$ such that $\{T^{n}z:n=0,1,\cdots \}$ is bounded.
\end{theorem}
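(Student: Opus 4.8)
The plan is to prove the two implications separately; the forward one is routine, while the reverse (existence) direction carries all the content. Throughout I take the defining inequality of an $(\alpha,\beta)$-generalized hybrid mapping (the definition promised ``in the sequel'') to be
\begin{equation*}
\alpha\|Tx-Ty\|^{2}+(1-\alpha)\|x-Ty\|^{2}\le \beta\|Tx-y\|^{2}+(1-\beta)\|x-y\|^{2}
\end{equation*}
for all $x,y\in C$, with fixed reals $\alpha,\beta$. For the necessity, I would assume $A(T)\ne\emptyset$ and pick $p\in A(T)$. Since $C$ is nonempty, choose any $z\in C$; because $T:C\to C$, every iterate $T^{n}z$ lies in $C$, so the attractive-point inequality applied with $x=T^{n}z$ gives $\|T^{n+1}z-p\|\le\|T^{n}z-p\|$. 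Iterating yields $\|T^{n}z-p\|\le\|z-p\|$ for all $n$, so $\{T^{n}z\}$ is bounded.

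For the sufficiency, I would assume $\{T^{n}z\}$ is bounded for some $z\in C$ and fix a Banach limit $\mu$. Define $g:H\to\mathbb{R}$ by $g(y)=\mu_{n}\|T^{n}z-y\|^{2}$, which is finite because the orbit is bounded. Expanding the square and using that $\mu$ is a positive linear functional gives
\begin{equation*}
g(y)=\|y\|^{2}-2\langle a,y\rangle+\mu_{n}\|T^{n}z\|^{2},
\end{equation*}
where $a\in H$ is the Riesz representative of the bounded linear functional $y\mapsto\mu_{n}\langle T^{n}z,y\rangle$. Completing the square shows $g(y)=\|y-a\|^{2}+(\mu_{n}\|T^{n}z\|^{2}-\|a\|^{2})$, so $g$ is continuous, strictly convex and coercive, with a unique minimizer $p:=a$ enjoying the crucial identity $g(y)-g(p)=\|y-p\|^{2}$ for every $y\in H$.

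The key step is to feed the orbit into the \emph{first} argument of the hybrid inequality: with $x=T^{n}z$ and $y\in C$ arbitrary it reads
\begin{equation*}
\alpha\|T^{n+1}z-Ty\|^{2}+(1-\alpha)\|T^{n}z-Ty\|^{2}\le\beta\|T^{n+1}z-y\|^{2}+(1-\beta)\|T^{n}z-y\|^{2}.
\end{equation*}
Applying $\mu$ and using its shift-invariance, namely $\mu_{n}\|T^{n+1}z-w\|^{2}=\mu_{n}\|T^{n}z-w\|^{2}=g(w)$, makes both the $\alpha$- and the $\beta$-weighted convex combinations collapse, leaving $g(Ty)\le g(y)$ for all $y\in C$. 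Substituting the identity $g(w)-g(p)=\|w-p\|^{2}$ then converts this into $\|Ty-p\|^{2}\le\|y-p\|^{2}$ for all $y\in C$, that is, $p\in A(T)$, which finishes the argument.

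I expect the main obstacle to be exactly this choice of slot. The naive substitution $y=T^{n}z$ (orbit in the \emph{second} argument) only produces $(\alpha-\beta)(\|Tx-p\|^{2}-\|x-p\|^{2})\le0$, which is useless unless $\alpha>\beta$; since $\alpha,\beta$ are arbitrary reals, no such sign control is available. Placing the orbit in the first argument forces both sides of the inequality through $g$ as genuine convex combinations whose unknown weights cancel, so the conclusion needs no hypothesis on the sign of $\alpha-\beta$. The only remaining delicate point is the existence and explicit quadratic form of the minimizer $p$, which the Riesz representation theorem supplies directly.
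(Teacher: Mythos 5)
Your proof is correct, and it is essentially the original Takahashi--Takeuchi argument: the paper itself states this theorem without proof (citing \cite{TT11}), but the tools it assembles in the preliminaries --- the Banach limit and the representative $u_{0}\in\overline{co}\{x_{n}\}$ with $\mu_{n}\left\langle x_{n},v\right\rangle =\left\langle u_{0},v\right\rangle$ --- are exactly the ones you use to build $g$ and its minimizer $p$. Your closing observation about which slot of the hybrid inequality must receive the orbit (so that shift-invariance collapses both convex combinations regardless of the signs of $\alpha$ and $\beta$) is precisely the point of the standard proof, so nothing is missing.
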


Obviously, the hypothesis does not require any closedness or convexity.
Takahashi and Takeuchi \cite{TT11} also gave some properties of the
attractive points as follows.

\begin{lemma}
\label{ATFT1} Let $H$ be a real Hilbert space and let $C$ be a nonempty
closed convex subset of $H$. Let $T:C\rightarrow C$ be a mapping. If $%
A(T)\neq \varnothing $ then $F(T)\neq \varnothing .$
\end{lemma}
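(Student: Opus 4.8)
The plan is to exploit the metric projection onto $C$, which is exactly where the closedness and convexity hypotheses will enter. Since $A(T)\neq\varnothing$, I would fix some $z\in A(T)$, so that by the definition of the attractive point set, $\|Tx-z\|\le\|x-z\|$ for every $x\in C$. Because $C$ is a nonempty closed convex subset of the Hilbert space $H$, the metric projection $P_{C}$ onto $C$ is well defined and single-valued; I would set $u=P_{C}z\in C$ and aim to prove that $u$ is the desired fixed point, i.e.\ $Tu=u$.

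The argument then rests on two inequalities pointing in opposite directions. First, applying the attractive-point condition with the choice $x=u$ gives $\|Tu-z\|\le\|u-z\|$. Second, since $u\in C$ forces $Tu\in C$ (as $T$ maps $C$ into $C$), and since $u=P_{C}z$ is by definition the point of $C$ nearest to $z$, the projection property yields $\|u-z\|\le\|Tu-z\|$. Combining the two gives $\|Tu-z\|=\|u-z\|=\min_{y\in C}\|y-z\|$, so that $Tu$ is also a nearest point of $C$ to $z$.

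To finish I would invoke the uniqueness of the nearest point: in a Hilbert space the projection onto a closed convex set is unique, so from $Tu\in C$ together with $\|Tu-z\|=\min_{y\in C}\|y-z\|$ one concludes $Tu=P_{C}z=u$. Hence $u\in F(T)$, and therefore $F(T)\neq\varnothing$, as claimed.

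The only delicate point is the last step: the existence and uniqueness of $P_{C}z$ is precisely what consumes the closedness and convexity of $C$, and without these hypotheses the two equidistant points $u$ and $Tu$ would not be forced to coincide. Everything else is a direct substitution into the defining inequality of $A(T)$, so I expect no computational obstacle beyond correctly citing the projection theorem.
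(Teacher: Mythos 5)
Your proof is correct and follows essentially the same route the paper takes for its two-mapping generalization (Lemma \ref{1}): project $z\in A(T)$ onto $C$, play the attractive-point inequality against the minimality of $\left\Vert P_{C}z-z\right\Vert$, and conclude $Tu=u$. If anything, you are slightly more careful than the paper, which passes from $\left\Vert Tu-z\right\Vert=\left\Vert u-z\right\Vert$ directly to $u\in F(T)$ without explicitly invoking the uniqueness of the nearest point, the step you rightly single out as where closedness and convexity are consumed.
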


\begin{lemma}
\label{CC} Let $H$ be a real Hilbert space and let $C$ be a nonempty subset
of $H$. Let $T:C\rightarrow H$ be a mapping. Then $A(T)$ is a closed and
convex subset of $H$.
\end{lemma}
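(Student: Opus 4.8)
The plan is to observe that, once the defining inequality of $A(T)$ is squared, it becomes an inequality that is \emph{affine} in the point $z$, which makes both assertions transparent. For each fixed $x\in C$ put $A_x=\{z\in H:\left\Vert Tx-z\right\Vert \leq \left\Vert x-z\right\Vert\}$, so that by definition $A(T)=\bigcap_{x\in C}A_x$. First I would square the inequality $\left\Vert Tx-z\right\Vert \leq \left\Vert x-z\right\Vert$ (legitimate since both sides are nonnegative), expand each side through the inner product, and cancel the common term $\left\Vert z\right\Vert^{2}$. This recasts membership in $A_x$ as
\[
\langle x-Tx,\,z\rangle \leq \tfrac12\left(\left\Vert x\right\Vert^{2}-\left\Vert Tx\right\Vert^{2}\right),
\]
a single non-strict linear inequality in $z$. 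Hence each $A_x$ is a closed half-space of $H$, and in particular is both closed and convex.

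Once this is in place, the conclusion is immediate: $A(T)=\bigcap_{x\in C}A_x$ is an arbitrary intersection of closed convex sets, and such an intersection is again closed and convex. If instead one prefers to argue the two properties directly and keep the calculation self-contained, I would proceed as follows. For closedness, take $z_n\in A(T)$ with $z_n\to z$; fixing $x\in C$ and letting $n\to\infty$ in $\left\Vert Tx-z_n\right\Vert \leq \left\Vert x-z_n\right\Vert$, continuity of the norm yields $\left\Vert Tx-z\right\Vert \leq \left\Vert x-z\right\Vert$, and since $x$ was arbitrary, $z\in A(T)$. For convexity, take $z_1,z_2\in A(T)$, $\lambda\in[0,1]$, and $z=\lambda z_1+(1-\lambda)z_2$; writing $Tx-z=\lambda(Tx-z_1)+(1-\lambda)(Tx-z_2)$ and $x-z=\lambda(x-z_1)+(1-\lambda)(x-z_2)$ and applying the Hilbert-space identity $\left\Vert \lambda a+(1-\lambda)b\right\Vert^{2}=\lambda\left\Vert a\right\Vert^{2}+(1-\lambda)\left\Vert b\right\Vert^{2}-\lambda(1-\lambda)\left\Vert a-b\right\Vert^{2}$ to both sides, I would note that the correction term $\lambda(1-\lambda)\left\Vert z_1-z_2\right\Vert^{2}$ is identical in the two expansions and cancels, so the pointwise inequalities at $z_1$ and $z_2$ combine to give $\left\Vert Tx-z\right\Vert \leq \left\Vert x-z\right\Vert$.

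I do not expect any genuine obstacle: the lemma is essentially formal. The only points deserving care are that squaring the norm inequality is an equivalence (valid because both quantities are nonnegative), and that in each computation the quadratic terms $\left\Vert z\right\Vert^{2}$ (in the closedness/half-space argument) and $\lambda(1-\lambda)\left\Vert z_1-z_2\right\Vert^{2}$ (in the direct convexity argument) cancel cleanly so that what remains is the desired linear comparison. Either route delivers both conclusions simultaneously.
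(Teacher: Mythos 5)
Your proof is correct. Note that the paper does not prove this lemma itself --- it quotes it from Takahashi and Takeuchi \cite{TT11} --- but your half-space argument (squaring, cancelling $\left\Vert z\right\Vert ^{2}$, and rewriting membership in each $A_{x}$ as the linear inequality $2\left\langle x-Tx,z\right\rangle \leq \left\Vert x\right\Vert ^{2}-\left\Vert Tx\right\Vert ^{2}$, so that $A(T)$ is an intersection of closed half-spaces) is exactly the standard proof given in that reference, which the paper's own Lemma \ref{CAPCC} in turn leans on; your alternative direct verifications of closedness and convexity are also sound.
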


Later, the following was noted by Takahashi et al. \cite{TWY}\ for
quasi-non-expansive mappings.

\begin{lemma}
\label{ATFT2} Let $H$ be a real Hilbert space and let $C$ be a nonempty
subset of $H$. Let $T:C\rightarrow H$ be a quasi-nonexpansive mapping $($%
that is, $\left\Vert Tx-z\right\Vert \leq \left\Vert x-z\right\Vert ,$ $z\in
F(T))$. Then $A(T)\cap C=F(T).$
\end{lemma}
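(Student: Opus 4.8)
The plan is to prove the set equality $A(T)\cap C=F(T)$ by establishing the two inclusions separately; both turn out to be immediate once the definitions are unwound.

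First I would prove $F(T)\subseteq A(T)\cap C$. Take any $z\in F(T)$. By the definition of $F(T)$ we already have $z\in C$, so it remains only to check that $z\in A(T)$. Since $z$ is a fixed point, the quasi-nonexpansive hypothesis reads $\left\Vert Tx-z\right\Vert \leq \left\Vert x-z\right\Vert$ for every $x\in C$, which is exactly the defining condition for $z$ to be an attractive point. Hence $z\in A(T)$, and combined with $z\in C$ this gives $z\in A(T)\cap C$.

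For the reverse inclusion $A(T)\cap C\subseteq F(T)$, take $z\in A(T)\cap C$. Membership in $A(T)$ yields $\left\Vert Tx-z\right\Vert \leq \left\Vert x-z\right\Vert$ for all $x\in C$. The crucial move is that $z$ itself lies in $C$, so I may substitute $x=z$ into this inequality, obtaining $\left\Vert Tz-z\right\Vert \leq \left\Vert z-z\right\Vert =0$. Therefore $Tz=z$, i.e.\ $z\in F(T)$, which completes the inclusion.

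The argument presents no genuine obstacle; the only point requiring attention is the role of the intersection with $C$. Without insisting that $z\in C$ one could not legitimately set $x=z$ in the attractive-point inequality, and indeed an attractive point lying outside $C$ need not be fixed. Note also the asymmetric use of the hypotheses: the quasi-nonexpansiveness of $T$ is precisely what powers the forward inclusion, since it supplies the attractive-point inequality at each fixed point, whereas the reverse inclusion uses only the bare definition of $A(T)$ together with the condition $z\in C$.
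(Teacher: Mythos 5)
Your proof is correct, and it is essentially the same argument the paper uses for its two-mapping generalization ($CAP(S,T)\cap C=F(S)\cap F(T)$): the reverse inclusion by substituting $x=z\in C$ into the attractive-point inequality, and the forward inclusion directly from quasi-nonexpansiveness. (The paper states Lemma~\ref{ATFT2} itself without proof, citing \cite{TWY}, but your argument matches the standard one and the paper's own proof of the analogous lemma.)
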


Let $l^{\infty }$ be the Banach space of bounded sequences with supremum
norm and $(l^{\infty })^{\ast }$ be its dual space (set of all continuous
linear functionals on $l^{\infty }$). It is well-known that there exists a $%
\mu \in (l^{\infty })^{\ast }$ (that is, there exists a continuous linear
functional on $l^{\infty }$) such that $\left\Vert \mu \right\Vert =\mu
(1)=1 $ and $\mu _{n}(x_{n+1})=\mu _{n}(x_{n})$ for each $%
x=(x_{1},x_{2},x_{3},\cdots )\in l^{\infty }.$ Such a $\mu $ is called a
Banach limit. Sometimes $\mu _{n}(x_{n})$ is denoted by $\mu (x).$ It is
also known that for a Banach limit $\mu ,\liminf_{n\rightarrow \infty
}x_{n}\leq \mu (x)\leq \limsup_{n\rightarrow \infty }x_{n}$ for each $%
x=(x_{1},x_{2},x_{3},\cdots )\in l^{\infty }.$\ As a special case, if $%
\lim_{n\rightarrow \infty }x_{n}$ exists and is $a,$ then $\mu (x)=a$ too.
This means the idea of a Banach limit is an extension of the idea of ususal
limits. It is also a well-known result that for a bounded sequence $\left\{
x_{n}\right\} $ in a Hilbert space $H$, there exists a unique $u_{0}\in 
\overline{co}\{x_{n}:n\in \mathbb{N}\}$ such that $\mu _{n}\left\langle
x_{n},v\right\rangle =\left\langle u_{0},v\right\rangle $ for all $v\in H.$

Recall that for every closed convex subset $C$ of a Hilbert space $H$, there
exists the metric projection $P_{C}:H\rightarrow C$. That is, for each $x\in
H,$ there is a unique element $P_{C}x\in C$ such that $\left\Vert
x-P_{C}x\right\Vert \leq \left\Vert x-y\right\Vert $ for all $y\in C.$ We
also need the following lemma due to Takahashi and Toyoda \cite{TT}.

\begin{lemma}
\label{TT} Let $K$ be a nonempty closed convex subset of a real Hilbert
space $H.$ Let $P_{K}:H\rightarrow K$ be the metric projection. Let $%
\{x_{n}\}$ be a sequence in $H.$ If $\left\Vert x_{n+1}-k\right\Vert \leq
\left\Vert x_{n}-k\right\Vert $ for any $k\in K$ and $n\in \mathbb{N},$ then 
$\{P_{K}x_{n}\}$ converges strongly to some $k_{0}\in K.$
\end{lemma}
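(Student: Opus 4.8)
The plan is to prove that $\{P_K x_n\}$ is a Cauchy sequence; since $H$ is complete and $K$ is closed, its limit $k_0$ will then automatically lie in $K$, giving the asserted strong convergence. Throughout I write $p_n = P_K x_n$. The whole argument will be driven by the variational characterization of the metric projection, namely $\langle x - P_K x,\, y - P_K x\rangle \le 0$ for every $x \in H$ and $y \in K$. Expanding $\|x-y\|^2$ around $P_K x$ and discarding the resulting nonnegative cross term yields the Pythagorean-type estimate
\begin{equation*}
\|P_K x - y\|^2 \le \|x - y\|^2 - \|x - P_K x\|^2 \qquad (y \in K),
\end{equation*}
and essentially everything follows from this inequality together with the hypothesis.

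First I would show that the distance sequence $d_n := \|x_n - p_n\| = \mathrm{dist}(x_n, K)$ is nonincreasing. Since $p_n \in K$, the hypothesis applied with $k = p_n$ gives $\|x_{n+1} - p_n\| \le \|x_n - p_n\| = d_n$; and since $p_n$ is one admissible competitor for the projection of $x_{n+1}$, we have $d_{n+1} = \mathrm{dist}(x_{n+1},K) \le \|x_{n+1} - p_n\|$. Chaining these inequalities yields $d_{n+1} \le d_n$, so $\{d_n\}$ is nonincreasing and bounded below by $0$; hence $\{d_n\}$, and therefore $\{d_n^2\}$, converges.

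Next, fixing $m > n$ and applying the displayed estimate to $x = x_m$ and $y = p_n \in K$ gives $\|p_m - p_n\|^2 \le \|x_m - p_n\|^2 - \|x_m - p_m\|^2$. The hypothesis with $k = p_n$, iterated from index $n$ to $m$, gives $\|x_m - p_n\| \le \|x_n - p_n\| = d_n$ for all $m \ge n$, whence
\begin{equation*}
\|p_m - p_n\|^2 \le d_n^2 - d_m^2 .
\end{equation*}
Because $\{d_n^2\}$ is convergent, the right-hand side tends to $0$ as $m, n \to \infty$, so $\{p_n\}$ is Cauchy and thus converges strongly to some $k_0 \in K$.

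I expect the only genuinely delicate point to be the monotonicity of $\{d_n\}$: one must combine the \emph{competitor} bound for the projection of $x_{n+1}$ with the Fej\'er-type hypothesis in the correct order, and it is precisely the convergence of $\{d_n^2\}$ obtained there that collapses the Cauchy estimate. After that, completeness of $H$ and closedness of $K$ finish the proof with no further work.
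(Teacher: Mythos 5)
Your proof is correct: the Fej\'er-type hypothesis applied with $k=p_n$ gives both the monotonicity of $d_n=\mathrm{dist}(x_n,K)$ and the bound $\|x_m-p_n\|\le d_n$ for $m\ge n$, and combining the latter with the projection inequality $\|P_Kx_m-p_n\|^2\le\|x_m-p_n\|^2-\|x_m-P_Kx_m\|^2$ yields $\|p_m-p_n\|^2\le d_n^2-d_m^2$, which collapses since $\{d_n^2\}$ converges. The paper itself states this lemma without proof, citing Takahashi and Toyoda, and your argument is precisely the standard one from that source, so there is nothing to compare against beyond noting that your write-up is complete and correct.
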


Mathematicians started working on attractive points in various directions
after the publication of Theorem \ref{Th1}, see for example, \cite{AT}, \cite%
{CGZ}, \cite{GT}, \cite{KIK}, \cite{KwT}, \cite{KTY}, \cite{KT}, \cite{TWY}, 
\cite{TY} and \cite{Z}. Let us start by recalling the definitions and
possible comparisons of different types of mappings. In the sequel, we take
the mapping $T:C\rightarrow H$ unless otherwise specified. $T$\ is called
contractive if there exists a real number $\alpha $ with $0<\alpha <1$ such
that $\left\Vert Tx-Ty\right\Vert \leq \alpha \left\Vert x-y\right\Vert $
for all $x,y\in C$. $T$ is said to be nonexpansive if $\left\Vert
Tx-Ty\right\Vert \leq \left\Vert x-y\right\Vert $ for all $x,y\in C.$ $T$ is
said to be quasi-nonexpansive if for $p\in F(T),$ $\left\Vert
Tx-p\right\Vert \leq \left\Vert x-p\right\Vert $ for all $x\in C.$ $T\ $is
called quasi-contractive (due to Berinde \cite{B}) if there exist real
numbers $\alpha $ with $0<\alpha <1$\ and $L\geq 0$ such that $\left\Vert
Tx-Ty\right\Vert \leq \alpha \left\Vert x-y\right\Vert +L\left\Vert
x-Tx\right\Vert $ for all $x,y\in C$. Note that class of quasi-contractive
mappings already contains contractions, Kannan, Chatterji and Zamfiresu
operators (for definitions see \cite{B}). Takahashi et al. \cite{TWY}
introduced a broader class of nonlinear mappings which contains the class of
contractive mappings and the class of generalized hybrid mappings. $T$ is
called normally generalized hybrid if there exist $\alpha ,\beta ,\gamma
,\delta \in \mathbb{R}$ such that%
\begin{equation}
\alpha \left\Vert Tx-Ty\right\Vert ^{2}+\beta \left\Vert x-Ty\right\Vert
^{2}+\gamma \left\Vert Tx-y\right\Vert ^{2}+\delta \left\Vert x-y\right\Vert
^{2}\leq 0  \label{ngm}
\end{equation}%
for all $x,y\in C.$ A normally generalized hybrid mapping with a fixed point
is quasi-nonexpansive. Moreover, a normally generalized hybrid mapping with $%
\alpha =1,\beta =\gamma =0,-1<\delta <0,$ is a contractive mapping. However,
this class does not contain the class of quasi-contractive mappings due to
Berinde \cite{B}. Finally, we have also found another class of mappings in 
\cite{GT} which was originally introduced by Kawasaki and Takahashi \cite%
{KwT} and called "widely more generalized hybrid" in a Hilbert space. $T$ is
called "widely more generalized hybrid" if there exist $\alpha ,\beta
,\gamma ,\delta ,\varepsilon ,\varsigma ,\eta \in \mathbb{R}$ such that%
\begin{equation}
\left. 
\begin{array}{c}
\alpha \left\Vert Tx-Ty\right\Vert ^{2}+\beta \left\Vert x-Ty\right\Vert
^{2}+\gamma \left\Vert Tx-y\right\Vert ^{2}+\delta \left\Vert x-y\right\Vert
^{2} \\ 
+\varepsilon \left\Vert x-Tx\right\Vert ^{2}+\varsigma \left\Vert
y-Ty\right\Vert ^{2}+\eta \left\Vert (x-Tx)-(y-Ty)\right\Vert ^{2}\leq 0%
\end{array}%
\right.  \label{wmgm}
\end{equation}%
for all $x,y\in C.$ They noted that the class of widely more generalized
hybrid mapping contains the class of normally generalized hybrid mappings
but not of quasi-nonexpansive mappings generally even with having a fixed
point.

Our purpose in this paper is (i) to introduce the concept of further
generalized hybrid mappings (ii) to introduce the concept of common
attractive points (CAP) (iii) to write and use Picard-Mann iterative process
for two mappings. We approximate common attractive points of further
generalized hybrid mappings by using iterative process due to Khan \cite{SHK}
generalized to the case of two mappings in Hilbert spaces without closedness
of $C$. First we introduce further generalized hybrid mappings as another
generalization of normally generalized hybrid mappings. $T$ is called a
further generalized mapping if there exist $\alpha ,\beta ,\gamma ,\delta
,\epsilon \in \mathbb{R}$ such that%
\begin{equation}
\left. 
\begin{array}{c}
\alpha \left\Vert Tx-Ty\right\Vert ^{2}+\beta \left\Vert x-Ty\right\Vert
^{2}+\gamma \left\Vert Tx-y\right\Vert ^{2} \\ 
+\delta \left\Vert x-y\right\Vert ^{2}+\varepsilon \left\Vert
x-Tx\right\Vert ^{2}\leq 0%
\end{array}%
\right.  \label{sgm}
\end{equation}%
for all $x,y\in C.$ Obviously, this is a generalization of $(\ref{ngm})$
when $\varepsilon =0.$ It is noteworthy that it contains the class of
quasi-nonexpansive mappings, quasi-contractive mappings due to Berinde \cite%
{B} and in turm, contractive mappings, Kannan mappings, Chatterjea mappings,
Zamfirescu mappings. For definitions of these mappings, see for example, 
\cite{B}. To see that $(\ref{sgm})$ actually contains quasi-contractive
mappings due to Berinde \cite{B}, choose $\alpha =1,\beta =\gamma =0,\delta
\in (-1,0)$, $\varepsilon \in (-\infty ,0]$ and then using\emph{\ }$%
a^{2}+b^{2}\leq (a+b)^{2}$for all nonnegative real number\emph{s }$a,b.\ $%
Recall that quasi-contractive mappings due to Berinde \cite{B} are not
contained in $(\ref{ngm})$. Apparently, this seems a special case of "widely
more generalized hybrid" mappings $(\ref{wmgm})$ when $\varsigma =\eta =0.$
However,\ our class not only constitutes a simple generalization of $(\ref%
{ngm})$ but also as mentioned above contains the class of quasi-nonexpansive
mappings when it has a fixed point contrary to "widely more generalized
hybrid" mappings $(\ref{wmgm})$. So the results obtained by our new mapping
will not only be more general but also simpler.

Now, we introduce the concept of common attractive points for two mappings $%
S $ and $T$ denoted and defined as follows:%
\begin{equation*}
CAP(S,T)=\{z\in H:\max (\left\Vert Sx-z\right\Vert ,\left\Vert
Tx-z\right\Vert \leq \left\Vert x-z\right\Vert \}
\end{equation*}%
for all $x\in C.$ Obviously, $z\in CAP(S,T)$ means that $z\in A(S)$ as well
as $z\in A(T).$ Note also that: $CAP(S,T)=A(T)$ when $S=T$ .

Recall that Mann iterative process is:%
\begin{equation}
\begin{cases}
x_{1}=x\in C, \\ 
x_{n+1}=\left( 1-\alpha _{n}\right) x_{n}+\alpha _{n}Tx_{n},\;n\in 
\mathbb{N}
.%
\end{cases}
\label{Mann}
\end{equation}%
Khan \cite{SHK} introduced a new iterative process called Picard-Mann hybrid
iterative process: 
\begin{equation}
\begin{cases}
x_{1}=x\in C, \\ 
x_{n+1}=Ty_{n}, \\ 
y_{n}=\left( 1-\alpha _{n}\right) x_{n}+\alpha _{n}Tx_{n},\;n\in 
\mathbb{N}%
\end{cases}%
\;  \label{Safeer}
\end{equation}%
where $\{\alpha _{n}\}$ is in $(0,1).$ It was proved to be independent but
faster than all Picard, Mann, Ishikawa processes. Finally, we generalize it
to the case of two mappings $S$ and $T$ as follws:%
\begin{equation}
\begin{cases}
x_{1}=x\in C, \\ 
x_{n+1}=Sy_{n}, \\ 
y_{n}=\left( 1-\alpha _{n}\right) x_{n}+\alpha _{n}Tx_{n},\;n\in 
\mathbb{N}%
\end{cases}%
\;  \label{S1}
\end{equation}%
where $\{\alpha _{n}\}$ is in $(0,1).$\ This process reduces to Mann if $S=I$%
, the identity mapping and at the same time deals with common attactive
points.

In short, we approximate common attractive points of $(\ref{sgm})$\ through $%
(\ref{S1})$ in Hilbert spaces without closedness of $C$. Our results are
generalizations and improvements of several results in the literature as
mentioned later in this paper.

\section{Main Results}

Let us first give some useful properties of $CAP(S,T)$ on the lines similar
to Lemmas $\ref{ATFT1}$, $\ref{CC}$ and $\ref{ATFT2}.$ For the sake of
simplicity, we take the parameters $\alpha ,\beta ,\gamma ,\delta
,\varepsilon \in \mathbb{R}$ same for the two further generalized hybrid
mappings $S,T$ as defined in $(\ref{sgm})$.

\begin{lemma}
\label{1}Let $H$ be a real Hilbert space and let $C$ be a nonempty closed
convex subset of $H$. Let $S,T:C\rightarrow C$ be two mappings. If $%
CAP(S,T)\neq \varnothing $ then $F(S)\cap F(T)\neq \varnothing .$ In
particular, if $z\in CAP(S,T)$, then $P_{C}z\in F(S)\cap F(T).$
\end{lemma}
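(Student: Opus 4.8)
The plan is to prove this lemma by reducing the statement about common attractive points to the statement about individual attractive points, and then invoking the metric projection together with the defining inequality $(\ref{sgm})$.

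First I would observe that if $z\in CAP(S,T)$ then by definition $z\in A(S)\cap A(T)$, so it suffices to show that $P_{C}z$ is a fixed point of each mapping separately; by symmetry I only need to handle one mapping, say $T$, and the same argument applies to $S$. Since $C$ is nonempty closed and convex, the metric projection $P_{C}:H\rightarrow C$ exists, so $P_{C}z\in C$ is well-defined. The goal becomes showing $T(P_{C}z)=P_{C}z$. Write $w=P_{C}z$ for brevity. Because $z\in A(T)$, we have $\left\Vert Tx-z\right\Vert \leq \left\Vert x-z\right\Vert$ for all $x\in C$; in particular, taking $x=w$ gives $\left\Vert Tw-z\right\Vert \leq \left\Vert w-z\right\Vert$. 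Since $Tw\in C$ and $w=P_{C}z$ is the \emph{unique} nearest point of $C$ to $z$, this inequality forces $Tw=w$ unless $Tw$ is also a nearest point; I would argue that the characterization of the metric projection makes $w$ the unique minimizer, so $\left\Vert Tw-z\right\Vert \leq \left\Vert w-z\right\Vert$ combined with $Tw\in C$ yields $\left\Vert Tw-z\right\Vert = \left\Vert w-z\right\Vert$ and hence $Tw=w$ by uniqueness.

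The main obstacle is that the bare inequality $\left\Vert Tw-z\right\Vert \leq \left\Vert w-z\right\Vert$ does not immediately pin down $Tw=w$ using only the projection: a priori $Tw$ could be a different point at the same distance, except that the metric projection guarantees uniqueness of the minimizer, so any point of $C$ at distance $\leq \left\Vert w-z\right\Vert$ must equal $w$. Thus the key step is to combine the attractive-point inequality (which gives $Tw\in C$ lying no farther from $z$ than $w$) with the defining minimality of $P_{C}z$. To make this airtight I would use the standard variational characterization of the projection: $w=P_{C}z$ iff $\langle z-w,\,y-w\rangle \leq 0$ for all $y\in C$. Applying this with $y=Tw\in C$ gives $\langle z-w,\,Tw-w\rangle \leq 0$, and then expanding $\left\Vert Tw-z\right\Vert^{2}=\left\Vert Tw-w\right\Vert^{2}+2\langle Tw-w,\,w-z\rangle+\left\Vert w-z\right\Vert^{2}$ together with $\left\Vert Tw-z\right\Vert^{2}\leq\left\Vert w-z\right\Vert^{2}$ forces $\left\Vert Tw-w\right\Vert^{2}\leq -2\langle Tw-w,\,w-z\rangle=2\langle Tw-w,\,z-w\rangle\leq 0$, whence $Tw=w$.

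Running the identical computation with $S$ in place of $T$ shows $S(P_{C}z)=P_{C}z$, so $P_{C}z\in F(S)\cap F(T)$, which both establishes the ``in particular'' clause and, since $z\in CAP(S,T)$ exists by hypothesis $CAP(S,T)\neq\varnothing$, proves $F(S)\cap F(T)\neq\varnothing$. I note that this argument uses only that $z$ is an attractive point of each mapping and the variational inequality for $P_{C}$; it does not invoke the further-generalized-hybrid inequality $(\ref{sgm})$ at all, which is consistent with Lemma~\ref{ATFT1} being stated for arbitrary mappings $T$. Hence the proof is essentially a two-fold application of the single-map attractive-point-to-fixed-point passage via the metric projection.
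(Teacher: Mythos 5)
Your proof is correct and follows essentially the same route as the paper: both reduce to a single mapping, set $w=P_{C}z$, and play the minimality of $\left\Vert w-z\right\Vert$ over $C$ against the attractive-point inequality $\left\Vert Tw-z\right\Vert \leq \left\Vert w-z\right\Vert$ applied at $x=w$. The only difference is cosmetic: the paper concludes $Tw=w$ directly from uniqueness of the nearest point, while you make that step explicit via the variational inequality $\left\langle z-w,Tw-w\right\rangle \leq 0$, which is a slightly more detailed justification of the same fact.
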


\begin{proof}
Let $z\in CAP(S,T).$ Then $z\in A(S)$ and $z\in A(T)$ (and of course $z\in H$%
). Thus there is a unique element $u=P_{C}z\in C$ such that $\left\Vert
u-z\right\Vert \leq \left\Vert y-z\right\Vert $ for all $y\in C.$ Now $Tu\in
C$ implies $\left\Vert u-z\right\Vert \leq \left\Vert Tu-z\right\Vert .$ On
the other hand, $z\in A(T),$ therefore $\left\Vert Ty-z\right\Vert \leq
\left\Vert y-z\right\Vert $ for all $y\in C$ and in particular, $\left\Vert
Tu-z\right\Vert \leq \left\Vert u-z\right\Vert .$ Thus $\left\Vert
Tu-z\right\Vert \leq \left\Vert u-z\right\Vert \leq \left\Vert
Tu-z\right\Vert $ and hence $u\in F(T).$ Similarly, $u\in F(S)$ and so $%
F(S)\cap F(T)\neq \varnothing $ and $u=P_{C}z\in F(S)\cap F(T).$
\end{proof}

\begin{lemma}
\label{CAPCC}Let $H$ be a real Hilbert space and let $C$ be a nonempty
subset of $H$. Let $S,T:C\rightarrow C$ be two mappings. Then $CAP(S,T)$ is
a closed and convex subset of $H$.
\end{lemma}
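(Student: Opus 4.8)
The plan is to reduce the claim to the single-map statement already available in Lemma \ref{CC}. First I would note that for $z\in H$ the requirement $\max(\|Sx-z\|,\|Tx-z\|)\le\|x-z\|$ for all $x\in C$ is precisely the conjunction of $\|Sx-z\|\le\|x-z\|$ for all $x\in C$ and $\|Tx-z\|\le\|x-z\|$ for all $x\in C$; that is, $CAP(S,T)=A(S)\cap A(T)$. By Lemma \ref{CC} each of $A(S)$ and $A(T)$ is a closed convex subset of $H$, and since an arbitrary intersection of closed convex sets is again closed and convex, the result follows immediately.

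If one prefers a self-contained argument, I would instead run directly the computation that underlies Lemma \ref{CC}. The key step is to expand $\|Tx-z\|^{2}\le\|x-z\|^{2}$: the terms $\|z\|^{2}$ cancel on both sides, leaving the affine inequality
\begin{equation*}
2\langle Tx-x,\,z\rangle\ge\|Tx\|^{2}-\|x\|^{2},
\end{equation*}
and symmetrically with $S$ in place of $T$. For each fixed $x\in C$ these two inequalities each define a closed half-space in the variable $z$, so $CAP(S,T)$ is the intersection, taken over all $x\in C$ and over the two maps, of closed half-spaces. Each such half-space is closed and convex, and both properties are stable under arbitrary intersections, whence $CAP(S,T)$ is closed and convex.

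I do not anticipate any genuine difficulty. The only point needing a moment's care is the passage from the norm inequality to the affine inequality: one must check that the $\|z\|^{2}$ terms really do cancel, so that the surviving condition is linear in $z$ and its solution set is a half-space rather than a ball. Given that, the statement is an immediate consequence of the identification $CAP(S,T)=A(S)\cap A(T)$ together with the elementary stability of closedness and convexity under intersection.
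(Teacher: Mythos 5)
Your first paragraph is exactly the paper's argument: the author observes that $CAP(S,T)=A(S)\cap A(T)$ and invokes the closedness and convexity of each attractive-point set (Lemma \ref{CC}, i.e.\ Lemma 2.3 of the Takahashi--Takeuchi paper) together with stability of these properties under intersection. Your self-contained half-space computation is a correct and welcome expansion of what that cited lemma actually does, but it is not a different route.
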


\begin{proof}
Since intersection of two closed sets is closed and that of two convex sets
is convex, the proof follows on the lines similar to Lemma 2.3 of \cite{TT11}%
.
\end{proof}

\begin{lemma}
Let $H$ be a real Hilbert space and let $C$ be a nonempty subset of $H$. Let 
$S,T:C\rightarrow H$ be two quasi-nonexpansive mappings.Then $CAP(S,T)\cap
C=F(S)\cap F(T).$
\end{lemma}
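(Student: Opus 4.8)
The plan is to reduce the statement to the single-mapping case already recorded in \lemref{ATFT2}, by exploiting the fact that the common attractive point set factors as an intersection of the two individual attractive point sets. The first step is to record the set identity $CAP(S,T)=A(S)\cap A(T)$, which is immediate from the definitions: a point $z\in H$ lies in $CAP(S,T)$ precisely when $\max(\|Sx-z\|,\|Tx-z\|)\leq\|x-z\|$ holds for every $x\in C$, and this compound inequality is equivalent to the conjunction of $\|Sx-z\|\leq\|x-z\|$ for all $x\in C$ (that is, $z\in A(S)$) with $\|Tx-z\|\leq\|x-z\|$ for all $x\in C$ (that is, $z\in A(T)$). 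This observation is in fact already noted in the text immediately after the definition of $CAP(S,T)$.

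Next I would intersect with $C$ and rearrange, using that set intersection is associative and commutative, to obtain
\begin{equation*}
CAP(S,T)\cap C=\bigl(A(S)\cap A(T)\bigr)\cap C=\bigl(A(S)\cap C\bigr)\cap\bigl(A(T)\cap C\bigr).
\end{equation*}
Since $S$ and $T$ are each quasi-nonexpansive from $C$ into $H$, \lemref{ATFT2} applies separately to each of them, yielding $A(S)\cap C=F(S)$ and $A(T)\cap C=F(T)$. Substituting these two identities into the displayed equation produces $CAP(S,T)\cap C=F(S)\cap F(T)$, which is the claim.

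There is no substantial obstacle here: the result is essentially a bookkeeping consequence of \lemref{ATFT2} together with the factorization $CAP(S,T)=A(S)\cap A(T)$. The only point worth stating explicitly is the set-theoretic step $\bigl(A(S)\cap A(T)\bigr)\cap C=\bigl(A(S)\cap C\bigr)\cap\bigl(A(T)\cap C\bigr)$, which is what allows \lemref{ATFT2} to be invoked on each factor. Should one prefer to avoid citing \lemref{ATFT2}, one would instead repeat its short two-line argument for each mapping separately, using the containment $F(S),F(T)\subseteq C$ together with quasi-nonexpansiveness in one direction and the defining inequality of $A(\cdot)$ in the other; but reusing the already-proved lemma is cleaner and keeps the proof to a single line of manipulation.
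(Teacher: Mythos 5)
Your proof is correct. It takes a slightly different route from the paper: you factor $CAP(S,T)=A(S)\cap A(T)$, distribute the intersection with $C$ over the two factors, and then invoke \lemref{ATFT2} once for each mapping, whereas the paper gives a self-contained double-inclusion argument at the level of elements --- for the inclusion $CAP(S,T)\cap C\subseteq F(S)\cap F(T)$ it substitutes $x=z$ into the defining inequality to force $\max(\left\Vert Sz-z\right\Vert ,\left\Vert Tz-z\right\Vert )\leq 0$, and for the reverse inclusion it uses quasi-nonexpansiveness directly. In effect the paper inlines the short proof of \lemref{ATFT2} twice, which is exactly the fallback you mention at the end of your proposal. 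Your version is cleaner in that it reuses the already-stated lemma and isolates the only genuinely new content (the set-theoretic identity $\bigl(A(S)\cap A(T)\bigr)\cap C=\bigl(A(S)\cap C\bigr)\cap \bigl(A(T)\cap C\bigr)$); the paper's version has the minor advantage of not depending on \lemref{ATFT2}, which is stated there without proof as a result quoted from the literature. Both arguments are complete and the hypotheses of \lemref{ATFT2} (quasi-nonexpansive $T:C\rightarrow H$, $C$ a nonempty subset) match yours exactly, so the citation is legitimate.
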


\begin{proof}
Let $z\in CAP(S,T)\cap C.$ Then, by definition, $\max (\left\Vert
Sx-z\right\Vert ,$\newline
$\left\Vert Tx-z\right\Vert \leq \left\Vert x-z\right\Vert )$ for all $x\in
C.$ In particular, choosing $x=z\in C,$ we obtain $\max (\left\Vert
Sz-z\right\Vert ,\left\Vert Tz-z\right\Vert )\leq 0.$ That is, $z\in
F(S)\cap F(T).$ Conversely, since $z\in F(S)\cap F(T)$ and $S,T:C\rightarrow
H$ are quasi-nonexpansive mappings, we have $\left\Vert Sx-z\right\Vert \leq
\left\Vert x-z\right\Vert ,~\left\Vert Tx-z\right\Vert \leq \left\Vert
x-z\right\Vert \ $for all $x\in C.$ This implies that $\max (\left\Vert
Sx-z\right\Vert ,\left\Vert Tx-z\right\Vert )\leq \left\Vert x-z\right\Vert
\ $for all $x\in C.$ Clearly $z\in C.$ Hence $z\in CAP(S,T)\cap C$. This
completes the proof.
\end{proof}

Our next result is an existence theorem on common attractive points of two
further generalized hybrid mappings $(\ref{sgm})$ without any use of
closedness and convexity.\ This result is followed by some important remarks
on comparing it with some results in the current literature.

\begin{theorem}
\label{Khan1}Let $H$ be a real Hilbert space and let $C$ be a nonempty
subset of $H$. Let $S,T:C\rightarrow C$ be two further generalized hybrid
mappings as defined in $(\ref{sgm})$ which satisfy $\alpha +\beta +\gamma
+\delta \geq 0,\varepsilon \geq 0$ and either $\alpha +\beta >0$ or $\alpha
+\gamma >0.$ Then $CAP(S,T)\neq \varnothing $ if and only if there exists $%
z\in C$ such that $\{S^{n}z\cap T^{n}z,n=0,1,2,...\}$ is bounded.

\begin{proof}
Suppose that $CAP(S,T)\neq \varnothing $\ and $z\in CAP(S,T).$ Then\ by
definition, $\max (\left\Vert Sx-z\right\Vert ,\left\Vert Tx-z\right\Vert
)\leq \left\Vert x-z\right\Vert \ $for all $x\in C.$\ This means that $%
\left\Vert S^{n+1}x-z\right\Vert \leq \left\Vert S^{n}x-z\right\Vert $ and $%
\left\Vert T^{n+1}x-z\right\Vert )\leq \left\Vert T^{n}x-z\right\Vert \ $for
all $x\in C$ and $n\in \mathbb{N}.$\ That is, $\{S^{n}z\cap
T^{n}z,n=0,1,2,...\}$ is bounded.

Conversely, suppose that there exists $z\in C$ such that $\{S^{n}z\cap
T^{n}z,n=0,1,2,...\}$ is bounded. This gives that there exists $z\in C$ such
that $\{S^{n}z,n=0,1,2,...\}$ is bounded as well as $\{T^{n}z,n=0,1,2,...\}$
is bounded.$\ $Suppose that $\max (\left\Vert Sx-z\right\Vert ,\left\Vert
Tx-z\right\Vert )=\left\Vert Tx-z\right\Vert .$ After doing long
calculations on the lines similar to\ Theorem 8 of \cite{GT}, we find that
there exists a $p\in H$ such that $\left\Vert Tx-p\right\Vert ^{2}\leq
\left\Vert x-p\right\Vert ^{2}.$ This means that $p\in A(T).$ But by our
supposition on maximum, we get $\left\Vert Sx-p\right\Vert ^{2}\leq
\left\Vert x-p\right\Vert ^{2}.$ Thus $CAP(S,T)\neq \varnothing .$ In case, $%
\max (\left\Vert Sx-z\right\Vert ,$\newline
$\left\Vert Tx-z\right\Vert )=\left\Vert Sx-z\right\Vert ,$ we can get the
result by interchanging the role of $S$ and $T.$
\end{proof}
\end{theorem}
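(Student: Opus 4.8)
The plan is to handle the two implications separately, with the forward direction essentially immediate and the backward direction built on the Banach-limit machinery recalled in Section 1. For the forward implication, suppose $p\in CAP(S,T)$ and fix any $w\in C$ (possible since $C\neq\varnothing$). By definition $\|Sx-p\|\le\|x-p\|$ and $\|Tx-p\|\le\|x-p\|$ for every $x\in C$; applying these with $x=S^{n}w$ and $x=T^{n}w$ respectively and iterating gives $\|S^{n}w-p\|\le\|w-p\|$ and $\|T^{n}w-p\|\le\|w-p\|$ for all $n$. Hence both orbits lie in the ball of radius $\|w-p\|$ about $p$ and are bounded, which is what the condition asserts.

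For the backward implication, assume some $z\in C$ has both $\{T^{n}z\}$ and $\{S^{n}z\}$ bounded. First I would produce an attractive point of $T$. Let $\mu$ be a Banach limit and let $p\in\overline{co}\{T^{n}z\}$ be the unique point with $\mu_{n}\langle T^{n}z,v\rangle=\langle p,v\rangle$ for all $v\in H$. The engine is to insert $y=T^{n}z$ into $(\ref{sgm})$ for $T$, apply $\mu_{n}$, and use shift-invariance $\mu_{n}\langle T^{n+1}z,v\rangle=\mu_{n}\langle T^{n}z,v\rangle=\langle p,v\rangle$ together with the identity $\mu_{n}\|T^{n}z-p\|^{2}=\mu_{n}\|T^{n}z\|^{2}-\|p\|^{2}\ge0$. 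Expanding each squared norm and collecting the $\|Tx-p\|^{2}$ and $\|x-p\|^{2}$ parts, the $T^{n}z$- and $T^{n+1}z$-terms merge and one is left with
\[ (\alpha+\gamma)\|Tx-p\|^{2}+(\beta+\delta)\|x-p\|^{2}+(\alpha+\beta+\gamma+\delta)\,\mu_{n}\|T^{n}z-p\|^{2}+\varepsilon\|x-Tx\|^{2}\le0. \]
Since $\alpha+\beta+\gamma+\delta\ge0$, $\varepsilon\ge0$ and $\mu_{n}\|T^{n}z-p\|^{2}\ge0$, the last two summands are nonnegative and may be discarded, giving $(\alpha+\gamma)\|Tx-p\|^{2}+(\beta+\delta)\|x-p\|^{2}\le0$. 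If $\alpha+\gamma>0$, dividing and using $-(\beta+\delta)\le\alpha+\gamma$ (which is exactly $\alpha+\beta+\gamma+\delta\ge0$) yields $\|Tx-p\|\le\|x-p\|$, i.e. $p\in A(T)$. In the complementary case $\alpha+\beta>0$ I would instead substitute $x=T^{n}z$ and keep $y$ free; the term $\varepsilon\|T^{n}z-T^{n+1}z\|^{2}$ is again nonnegative and is dropped after applying $\mu_{n}$, leaving $(\alpha+\beta)\|Ty-p\|^{2}+(\gamma+\delta)\|y-p\|^{2}\le0$ and the same conclusion $p\in A(T)$. The identical argument applied to $\{S^{n}z\}$ produces a point $q\in A(S)$.

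The hard part, and the step where I expect the argument to break down, is upgrading the separate points $p\in A(T)$ and $q\in A(S)$ into a single common attractive point $r\in A(S)\cap A(T)=CAP(S,T)$. The Banach-limit construction is intrinsically tied to the orbit of one map, so it delivers $p$ and $q$ independently with no mechanism forcing them to coincide or even to lie in a common attractive set. In fact I doubt the stated hypotheses suffice: taking $H=C=\mathbb{R}$, $Tx=\tfrac{1}{2}x$ and $Sx=\tfrac{1}{2}x+1$, both maps satisfy $(\ref{sgm})$ with $\alpha=1$, $\beta=\gamma=0$, $\delta=-\tfrac{1}{4}$, $\varepsilon=0$ and all the coefficient conditions ($\alpha+\beta+\gamma+\delta=\tfrac34\ge0$, $\varepsilon\ge0$, $\alpha+\gamma=1>0$), and both orbits are bounded, yet $A(T)=\{0\}$ and $A(S)=\{2\}$ are disjoint, so $CAP(S,T)=\varnothing$. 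Thus any correct proof must exploit structure beyond the present hypotheses (for instance a relation tying the two orbits together), and I would concentrate my effort on whether such structure is genuinely available; absent it, the backward implication, and hence the stated equivalence, appears to fail.
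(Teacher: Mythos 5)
Your forward direction coincides with the paper's, and your Banach-limit derivation of a point $p\in A(T)$ from the boundedness of $\{T^{n}z\}$ is exactly the content of the ``long calculations on the lines similar to Theorem 8 of \cite{GT}'' that the paper invokes without writing out; that part of your work is correct and in fact more complete than the printed proof. Where you and the paper diverge is precisely the step you flag as the hard part. The paper bridges from $p\in A(T)$ to $p\in A(S)$ by first ``supposing'' that $\max (\left\Vert Sx-z\right\Vert ,\left\Vert Tx-z\right\Vert )=\left\Vert Tx-z\right\Vert$ and then asserting that ``by our supposition on maximum'' one gets $\left\Vert Sx-p\right\Vert ^{2}\leq \left\Vert x-p\right\Vert ^{2}$. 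This is exactly the missing mechanism you predicted: the supposition concerns the initial point $z$, not the constructed point $p$, and an inequality $\left\Vert Sx-z\right\Vert \leq \left\Vert Tx-z\right\Vert$ for one particular $z$ does not transfer to $\left\Vert Sx-p\right\Vert \leq \left\Vert Tx-p\right\Vert$; moreover the two ``cases'' on the maximum do not form an exhaustive dichotomy, since the maximum may be attained by $S$ for some $x\in C$ and by $T$ for others.

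Your counterexample settles the matter. On $H=C=\mathbb{R}$ the maps $Tx=\frac{1}{2}x$ and $Sx=\frac{1}{2}x+1$ satisfy $(\ref{sgm})$ with $\alpha =1$, $\beta =\gamma =0$, $\delta =-\frac{1}{4}$, $\varepsilon =0$ (indeed $\left\Vert Tx-Ty\right\Vert ^{2}=\frac{1}{4}\left\Vert x-y\right\Vert ^{2}$, and likewise for $S$), all the coefficient hypotheses hold, and both orbits of any $z$ converge and hence are bounded; yet taking $x=z$ in the definition of an attractive point forces $\left\Vert Tz-z\right\Vert =0$, so $A(T)\subseteq F(T)=\{0\}$ and $A(S)\subseteq F(S)=\{2\}$, whence $A(T)=\{0\}$, $A(S)=\{2\}$ and $CAP(S,T)=A(S)\cap A(T)=\varnothing$. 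Thus the backward implication, and with it the stated equivalence, fails under the given hypotheses: the paper's proof is not merely incomplete but irreparable without an additional assumption tying $S$ and $T$ together (for instance a common fixed point together with quasi-nonexpansiveness, or some relation between the two orbits). Your diagnosis is the correct one.
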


This theorem consitutes a generalization of Theorem 3.1 of \cite{TWY} and
the results generalized therein when $S=T$ and $\varepsilon =0.$ Clearly
this theorem handles existence of common attractive points, so it is
independent of Theorem 8 of \cite{GT}. But a special case of our result when 
$S=T$ can be obtained from Theorem 8 of \cite{GT} by choosing $\varsigma
=\eta =0$.\ However, even in this special case, it is more general in the
sense that our class of mappings is simpler and always covers the class of
quasi-nonexpansive mappings as opposed to Theorem 8 of \cite{GT}. The same
holds for all the results of \cite{GT}.

Let us now come to one of our main targets of proving a weak convergence
theorem in Hilber spaces without needing closedness of $C$.

\begin{theorem}
\label{Main}Let $H$ be a real Hilbert space and let $C$ be a nonempty convex
subset of $H$. Let $S,T:C\rightarrow C$ be two further generalized hybrid
mappings as defined in $(\ref{sgm})$ which satisfy $\alpha +\beta +\gamma
+\delta \geq 0,\varepsilon \geq 0$ and either $\alpha +\beta >0$ or $\alpha
+\gamma >0.$ Let $CAP(S,T)\neq \varnothing .$ If $\{x_{n}\}$ is defined by $(%
\ref{S1}),$where $\{\alpha _{n}\}$ is a sequence in $(0,1)$ with $\liminf
\alpha _{n}(1-\alpha _{n})>0,$ then $\{x_{n}\}$ converges weakly to a point $%
q\in CAP(S,T).$ Moreover, $q=\lim_{n\rightarrow \infty }Px_{n\text{ }}$where 
$P$ is projection of $H$ onto $CAP(S,T).$
\end{theorem}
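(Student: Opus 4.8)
The plan is to run the standard Fej\'er-monotone/Banach-limit scheme, adapted to the two-map process $(\ref{S1})$, and to finish with Opial's property together with the projection lemma. First fix $p\in CAP(S,T)$; then $p\in A(S)\cap A(T)$, so $\|Sx-p\|\le\|x-p\|$ and $\|Tx-p\|\le\|x-p\|$ for every $x\in C$. Applying these to $y_n$ and to $x_n$ gives $\|x_{n+1}-p\|=\|Sy_n-p\|\le\|y_n-p\|\le\|x_n-p\|$, so $\{\|x_n-p\|\}$ is nonincreasing, $\lim_n\|x_n-p\|$ exists for each $p\in CAP(S,T)$, and $\{x_n\}$ is bounded. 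Using the identity $\|(1-t)a+tb\|^2=(1-t)\|a\|^2+t\|b\|^2-t(1-t)\|a-b\|^2$ with $a=x_n-p$, $b=Tx_n-p$, $t=\alpha_n$, I get $\|x_{n+1}-p\|^2\le\|y_n-p\|^2=\|x_n-p\|^2-\alpha_n(1-\alpha_n)\|Tx_n-x_n\|^2$; telescoping and $\liminf\alpha_n(1-\alpha_n)>0$ force $\|Tx_n-x_n\|\to0$, whence also $\|y_n-x_n\|=\alpha_n\|Tx_n-x_n\|\to0$.

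The core is to locate weak cluster points inside $CAP(S,T)$. For the bounded sequence $\{x_n\}$ and a Banach limit $\mu$ there is a unique $u_0$ with $\mu_n\langle x_n,v\rangle=\langle u_0,v\rangle$ for all $v\in H$, and then $\mu_n\|x_n-z\|^2=\mu_n\|x_n-u_0\|^2+\|z-u_0\|^2$ for every $z\in H$. Fix $u\in C$ and put $x=u$, $y=x_n$ in $(\ref{sgm})$ for $T$; passing to $\mu_n$ and using $\|Tx_n-x_n\|\to0$ to replace $Tx_n$ by $x_n$ in the first two summands gives $(\alpha+\gamma)\mu_n\|Tu-x_n\|^2+(\beta+\delta)\mu_n\|u-x_n\|^2+\varepsilon\|u-Tu\|^2\le0$, and the orthogonal identity rewrites this as
\[
(\alpha+\beta+\gamma+\delta)\,\mu_n\|x_n-u_0\|^2+(\alpha+\gamma)\|Tu-u_0\|^2+(\beta+\delta)\|u-u_0\|^2+\varepsilon\|u-Tu\|^2\le0.
\]
Discarding the two nonnegative terms (using $\alpha+\beta+\gamma+\delta\ge0$ and $\varepsilon\ge0$) and invoking $\alpha+\gamma>0$ together with $-(\beta+\delta)\le\alpha+\gamma$ yields $\|Tu-u_0\|\le\|u-u_0\|$; the alternative $\alpha+\beta>0$ is handled identically from $x=x_n,\ y=u$. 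Since $u\in C$ is arbitrary, $u_0\in A(T)$.

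For $S$ the same computation is wanted, but the outer step $x_{n+1}=Sy_n$ carries no convex-combination structure, so the telescoping above does \emph{not} deliver $\|Sy_n-y_n\|\to0$, and this is the step I expect to be the main obstacle. On the full sequence it can be sidestepped by shift invariance: in $(\ref{sgm})$ for $S$ with $x=u$, $y=y_n$, the terms carrying $Sy_n$ are exactly those carrying $x_{n+1}$, and $\mu_n\|\cdot-x_{n+1}\|^2=\mu_n\|\cdot-x_n\|^2$; combined with $\|y_n-x_n\|\to0$ this collapses the $S$-inequality to the form above and gives $u_0\in A(S)$, hence $u_0\in CAP(S,T)$. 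To place \emph{every} weak cluster point (not merely the particular $u_0$) in $CAP(S,T)$, as Opial's argument below requires, I would run the previous computation along a subsequence $x_{n_j}\rightharpoonup w$, whose Banach-limit point is $w$; this forces me to replace $Sy_{n_j}=x_{n_j+1}$ by $x_{n_j}$, i.e. to know $\|x_{n+1}-x_n\|\to0$ (equivalently $\|Sy_n-y_n\|\to0$), since shift invariance is unavailable along a subsequence. Establishing this outer-map asymptotic regularity is the delicate point, and is where I expect to have to exploit $(\ref{sgm})$ for $S$ with the term $\varepsilon\ge0$ rather than only the attractive-point bound $\|Sy-p\|\le\|y-p\|$.

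Granting that every weak cluster point lies in $CAP(S,T)$, weak convergence follows by Opial: if $x_{n_j}\rightharpoonup w_1$ and $x_{m_k}\rightharpoonup w_2$ with $w_1,w_2\in CAP(S,T)$, then $\lim_n\|x_n-w_1\|$ and $\lim_n\|x_n-w_2\|$ exist, so $\lim_n\langle x_n,w_1-w_2\rangle$ exists; evaluating along the two subsequences gives $\langle w_1-w_2,w_1-w_2\rangle=0$, so $w_1=w_2$ and $x_n\rightharpoonup q\in CAP(S,T)$. For the final assertion, $CAP(S,T)$ is closed and convex by Lemma~\ref{CAPCC}, so $P=P_{CAP(S,T)}$ is defined; the Fej\'er inequality $\|x_{n+1}-k\|\le\|x_n-k\|$ for $k\in CAP(S,T)$ and Lemma~\ref{TT} give $Px_n\to q_0\in CAP(S,T)$ strongly. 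Finally, the variational inequality $\langle x_n-Px_n,\,Px_n-q\rangle\ge0$, passed to the limit as a product of a weakly convergent and a strongly convergent sequence, yields $\langle q-q_0,\,q_0-q\rangle\ge0$, i.e. $\|q-q_0\|^2\le0$; hence $q=q_0=\lim_n Px_n$.
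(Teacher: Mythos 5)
Your proposal follows the same route as the paper's own proof: Fej\'er monotonicity of $\Vert x_n-z\Vert$ for $z\in CAP(S,T)$, the convexity identity to extract $\lim_n\Vert Tx_n-x_n\Vert=0$, a Banach-limit computation placing weak cluster points in the attractive-point sets, the two-subsequence polarization argument for uniqueness of the weak limit, and Lemma~\ref{TT} for the projection claim. The obstacle you flag is a genuine gap, and you are right that it is the crux: the convex combination in (\ref{S1}) involves only $T$, so the telescoping yields asymptotic regularity for $T$ alone, and nothing in the hypotheses produces $\Vert Sy_n-y_n\Vert\to 0$ or $\Vert x_{n+1}-x_n\Vert\to 0$. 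Your shift-invariance device recovers membership in $A(S)$ only for the Banach-limit barycenter of the \emph{full} sequence and, as you note, cannot be run along the subsequence $x_{n_j}\rightharpoonup q$ that the Opial step requires, because $\mu_j\Vert Su-x_{n_j+1}\Vert^2$ and $\mu_j\Vert Su-x_{n_j}\Vert^2$ are Banach limits of two different sequences, not shifts of one another. So the assertion $q\in A(S)$ is not established, and your proof is incomplete at exactly the point you identify.

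You should know, however, that the paper's proof has the identical hole: after deriving $(\alpha+\beta)[\Vert q-Ty\Vert^2-\Vert q-y\Vert^2]\le 0$ for $T$ --- a derivation that consumes $\lim_n\Vert Tx_{n_j}-x_{n_j}\Vert=0$ --- it simply writes ``Similarly, we get $\Vert q-Sy\Vert^2-\Vert q-y\Vert^2\le 0$,'' with no analogue of asymptotic regularity for $S$ ever proved. Your write-up is therefore the published argument with its missing step made explicit rather than a weaker version of it; indeed you are more careful in two respects (you treat both alternatives $\alpha+\beta>0$ and $\alpha+\gamma>0$ by exchanging the roles of the two arguments of (\ref{sgm}), and your passage via $-(\beta+\delta)\le\alpha+\gamma$ is correct). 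To actually close the gap one would have to either restructure the iteration so that $S$ also enters through a convex combination (yielding $\Vert Sx_n-x_n\Vert\to 0$ by the same telescoping), or impose an additional hypothesis on $S$, such as demiclosedness of $I-S$ at $0$ together with some asymptotic regularity, which is not among the stated assumptions.
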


\begin{proof}
Let $z\in CAP(S,T).$ Then by $(\ref{S1}),$%
\begin{eqnarray*}
\left\Vert y_{n}-z\right\Vert ^{2} &=&\left\Vert \left( 1-\alpha _{n}\right)
x_{n}+\alpha _{n}Tx_{n}-z\right\Vert ^{2} \\
&\leq &\left( 1-\alpha _{n}\right) \left\Vert x_{n}-z\right\Vert ^{2}+\alpha
_{n}\left\Vert Tx_{n}-z\right\Vert ^{2} \\
&\leq &\left( 1-\alpha _{n}\right) \left\Vert x_{n}-z\right\Vert ^{2}+\alpha
_{n}\left\Vert x_{n}-z\right\Vert ^{2} \\
&=&\left\Vert x_{n}-z\right\Vert ^{2}
\end{eqnarray*}%
and so 
\begin{eqnarray*}
\left\Vert x_{n+1}-z\right\Vert ^{2} &=&\left\Vert Sy_{n}-z\right\Vert ^{2}
\\
&\leq &\left\Vert y_{n}-z\right\Vert ^{2} \\
&\leq &\left\Vert x_{n}-z\right\Vert ^{2}.
\end{eqnarray*}%
Thus 
\begin{equation}
\left\Vert x_{n+1}-z\right\Vert ^{2}\leq \left\Vert x_{n}-z\right\Vert ^{2}
\label{increasing}
\end{equation}%
for all $n\in \mathbb{N}$. Thus $\lim_{n\rightarrow \infty }\left\Vert
x_{n}-z\right\Vert ^{2}$ exists and so $\{x_{n}\}$ must be bounded.

\noindent Since $H$ is a Hilbert space, so 
\begin{eqnarray*}
\left\Vert x_{n+1}-z\right\Vert ^{2} &=&\left\Vert Sy_{n}-z\right\Vert ^{2}
\\
&\leq &\left\Vert y_{n}-z\right\Vert ^{2} \\
&=&\left\Vert \left( 1-\alpha _{n}\right) x_{n}+\alpha
_{n}Tx_{n}-z\right\Vert ^{2} \\
&=&\left. 
\begin{array}{c}
\left( 1-\alpha _{n}\right) \left\Vert x_{n}-z\right\Vert ^{2}+\alpha
_{n}\left\Vert Tx_{n}-z\right\Vert ^{2} \\ 
-\alpha _{n}\left( 1-\alpha _{n}\right) \left\Vert Tx_{n}-x_{n}\right\Vert
^{2}%
\end{array}%
\right. \\
&\leq &\left. 
\begin{array}{c}
\left( 1-\alpha _{n}\right) \left\Vert x_{n}-z\right\Vert ^{2}+\alpha
_{n}\left\Vert x_{n}-z\right\Vert ^{2} \\ 
-\alpha _{n}\left( 1-\alpha _{n}\right) \left\Vert Tx_{n}-x_{n}\right\Vert
^{2}%
\end{array}%
\right. \\
&=&\left\Vert x_{n}-z\right\Vert ^{2}-\alpha _{n}\left( 1-\alpha _{n}\right)
\left\Vert Tx_{n}-x_{n}\right\Vert ^{2}.
\end{eqnarray*}%
This implies that%
\begin{equation*}
\alpha _{n}\left( 1-\alpha _{n}\right) \left\Vert Tx_{n}-x_{n}\right\Vert
^{2}\leq \left\Vert x_{n}-z\right\Vert ^{2}-\left\Vert x_{n+1}-z\right\Vert
^{2}.
\end{equation*}%
Now using the condition $\liminf \alpha _{n}(1-\alpha _{n})>0$ and the above
proved fact that $\lim_{n\rightarrow \infty }\left\Vert x_{n}-z\right\Vert
^{2}$ exists, we have 
\begin{equation*}
\lim_{n\rightarrow \infty }\left\Vert Tx_{n}-x_{n}\right\Vert =0.
\end{equation*}

\noindent We have also proved in the above lines that $\{x_{n}\}$ is a
bounded sequence, therefore we have its subsequence $\{x_{n_{j}}\}$ such
that $x_{n_{j}}\rightharpoonup q\in C.$ Since $T:C\rightarrow C$ is a
further generalized mapping, therefore%
\begin{equation*}
\left. 
\begin{array}{c}
\alpha \left\Vert Tx_{n_{j}}-Ty\right\Vert ^{2}+\beta \left\Vert
x_{n_{j}}-Ty\right\Vert ^{2}+\gamma \left\Vert Tx_{n_{j}}-y\right\Vert ^{2}
\\ 
+\delta \left\Vert x_{n_{j}}-y\right\Vert ^{2}+\varepsilon \left\Vert
x_{n_{j}}-Tx_{n_{j}}\right\Vert ^{2}\leq 0%
\end{array}%
\right.
\end{equation*}%
and so%
\begin{equation*}
\left. 
\begin{array}{c}
\alpha (\left\Vert Tx_{n_{j}}-x_{n_{j}}\right\Vert ^{2}+\left\Vert
x_{n_{j}}-Ty\right\Vert ^{2}+2\left\langle
Tx_{n_{j}}-x_{n_{j}},x_{n_{j}}-Ty\right\rangle ) \\ 
+\beta \left\Vert x_{n_{j}}-Ty\right\Vert ^{2}+\gamma \left\Vert
Tx_{n_{j}}-y\right\Vert ^{2} \\ 
+\delta \left\Vert x_{n_{j}}-y\right\Vert ^{2}+\varepsilon \left\Vert
x_{n_{j}}-Tx_{n_{j}}\right\Vert ^{2}\leq 0.%
\end{array}%
\right.
\end{equation*}

\noindent Making use of \ Banach limit $\mu ,$ we get%
\begin{equation*}
(\alpha +\beta )\mu _{n}\left\Vert x_{n_{j}}-Ty\right\Vert ^{2}+(\gamma
+\delta )\mu _{n}\left\Vert x_{n_{j}}-y\right\Vert ^{2}\leq 0.
\end{equation*}

\noindent This yields that%
\begin{equation*}
\left. 
\begin{array}{c}
(\alpha +\beta )\mu _{n}[\left\Vert x_{n_{j}}-y\right\Vert ^{2}+\left\Vert
y-Ty\right\Vert ^{2}+2\left\langle x_{n_{j}}-y,y-Ty\right\rangle ] \\ 
+(\gamma +\delta )\mu _{n}\left\Vert x_{n_{j}}-y\right\Vert ^{2}\leq 0.%
\end{array}%
\right.
\end{equation*}

\noindent Thus%
\begin{equation*}
\left. 
\begin{array}{c}
(\alpha +\beta +\gamma +\delta )\mu _{n}[\left\Vert x_{n_{j}}-y\right\Vert
^{2} \\ 
+(\alpha +\beta )\left\Vert y-Ty\right\Vert ^{2}+2(\alpha +\beta )\mu
_{n}\left\langle x_{n_{j}}-y,y-Ty\right\rangle \leq 0.%
\end{array}%
\right.
\end{equation*}%
\noindent But $\alpha +\beta +\gamma +\delta \geq 0$, therefore%
\begin{equation*}
(\alpha +\beta )\left\Vert y-Ty\right\Vert ^{2}+2(\alpha +\beta )\mu
_{n}\left\langle x_{n_{j}}-y,y-Ty\right\rangle \leq 0.
\end{equation*}%
\noindent Since $x_{n_{j}}\rightharpoonup q,$ therefore%
\begin{equation*}
(\alpha +\beta )\left\Vert y-Ty\right\Vert ^{2}+2(\alpha +\beta
)\left\langle q-y,y-Ty\right\rangle \leq 0.
\end{equation*}

\noindent Since $H$ is a Hilbert space so using 
\begin{equation}
2\left\langle u-v,p-w\right\rangle =\left\Vert u-w\right\Vert
^{2}+\left\Vert v-p\right\Vert ^{2}-\left\Vert u-p\right\Vert
^{2}-\left\Vert v-w\right\Vert ^{2}  \label{Useful}
\end{equation}%
in the above inequality, we have%
\begin{equation*}
\left. 
\begin{array}{c}
(\alpha +\beta )\left\Vert y-Ty\right\Vert ^{2} \\ 
+(\alpha +\beta )[\left\Vert q-Ty\right\Vert ^{2}+\left\Vert y-y\right\Vert
^{2}-\left\Vert q-y\right\Vert ^{2}-\left\Vert y-Ty\right\Vert ^{2}]\leq 0.%
\end{array}%
\right.
\end{equation*}

\noindent This implies that $(\alpha +\beta )[\left\Vert q-Ty\right\Vert
^{2}-\left\Vert q-y\right\Vert ^{2}]\leq 0.$ Since $(\alpha +\beta
)>0,\left\Vert q-Ty\right\Vert ^{2}-\left\Vert q-y\right\Vert ^{2}\leq 0$.
Similarly, we get $\left\Vert q-Sy\right\Vert ^{2}-\left\Vert q-y\right\Vert
^{2}\leq 0$ and hence $q\in CAP(S,T).$ Next we prove that $%
x_{n}\rightharpoonup q$ by proving that any two subsequences of $\{x_{n}\}$
converge to the same limit $q.$ Let $x_{n_{j}}\rightharpoonup q_{1\text{ }}$%
and $x_{n_{k}}\rightharpoonup q_{2.}$ By what we have just proved, $q_{1%
\text{ }}$and $q_{2}$ belong to $CAP(S,T)$ and from the initial steps of
this proof we conclude that $\lim_{n\rightarrow \infty }(\left\Vert
x_{n}-q_{1}\right\Vert ^{2}-\left\Vert x_{n}-q_{2}\right\Vert ^{2})$ exists,
call it $\ell $. Now using $(\ref{Useful})$ again, $2\left\langle
x_{n},q_{2}-q_{1}\right\rangle =\left\Vert x_{n}-q_{1}\right\Vert
^{2}+\left\Vert q_{2}\right\Vert ^{2}-\left\Vert x_{n}-q_{2}\right\Vert
^{2}-\left\Vert q_{1}\right\Vert ^{2}.$ This yields $\left\Vert
x_{n}-q_{1}\right\Vert ^{2}-\left\Vert x_{n}-q_{2}\right\Vert
^{2}=2\left\langle x_{n},q_{2}-q_{1}\right\rangle -\left\Vert
q_{2}\right\Vert ^{2}+\left\Vert q_{1}\right\Vert ^{2}.$ Thus

$\left\Vert x_{nj}-q_{1}\right\Vert ^{2}-\left\Vert
x_{n_{j}}-q_{2}\right\Vert ^{2}=2\left\langle
x_{n_{j}},q_{2}-q_{1}\right\rangle -\left\Vert q_{2}\right\Vert
^{2}+\left\Vert q_{1}\right\Vert ^{2}$ and

$\left\Vert x_{n_{k}}-q_{1}\right\Vert ^{2}-\left\Vert
x_{n_{k}}-q_{2}\right\Vert ^{2}=2\left\langle
x_{n_{k}},q_{2}-q_{1}\right\rangle -\left\Vert q_{2}\right\Vert
^{2}+\left\Vert q_{1}\right\Vert ^{2}.$

Now taking weak limit on the above two equations and making use of $%
x_{n_{j}}\rightharpoonup q_{1\text{ }}$and $x_{n_{k}}\rightharpoonup q$, we
get

$\ell =2\left\langle q_{1},q_{2}-q_{1}\right\rangle -\left\Vert
q_{2}\right\Vert ^{2}+\left\Vert q_{1}\right\Vert ^{2}$

$\ell =2\left\langle q_{2},q_{2}-q_{1}\right\rangle -\left\Vert
q_{2}\right\Vert ^{2}+\left\Vert q_{1}\right\Vert ^{2}.$

\noindent Subtracting we get, $2\left\langle
q_{1}-q_{2},q_{2}-q_{1}\right\rangle =0$ and hence $q_{1}=q_{2}.$ In turn, $%
x_{n}\rightharpoonup q\in CAP(S,T).$

Finally, we show that $q=\lim_{n\rightarrow \infty }Px_{n}$ where $P$\ is
the projection of $H$ onto $CAP(S,T).$ Now from (\ref{increasing}), $%
\left\Vert x_{n+1}-z\right\Vert \leq \left\Vert x_{n}-z\right\Vert $ for all 
$z\in CAP(S,T)$ and $n\in \mathbb{N}.$ Since $CAP(S,T)$ is closed and convex
by Lemma \ref{CAPCC}, applying Lemma \ref{TT}, $\lim_{n\rightarrow \infty
}Px_{n}=p$ for some $p\in CAP(S,T).$ It is well known for projections that $%
\left\langle x_{n}-Px_{n},Px_{n}-z\right\rangle \geq 0$ for all $z\in
CAP(S,T)$ and $n\in \mathbb{N}.$ Therefore, $\left\langle
q-p,p-z\right\rangle \geq 0$ for all $z\in CAP(S,T)$ and in particular, $%
\left\langle q-p,p-q\right\rangle \geq 0.$ Hence, $q=p=\lim_{n\rightarrow
\infty }Px_{n}.$
\end{proof}

Although the following is a corollary to the above theorem, yet it is a new
result in itself. As already mentioned, the iterative process $(\ref{Safeer}%
) $ is independent but faster than several iterative processes, therefore\
this corollary has its own standing.

\begin{corollary}
\label{T}Let$\ H,C,$ $T$ and $\alpha ,\beta ,\gamma ,\delta ,\varepsilon $
be as in Theorem $\ref{Main}.$ If $\{x_{n}\}$ is defined by the itertive
process $(\ref{Safeer}),$ where $\{\alpha _{n}\}$ is a sequence in $(0,1)$
with $\liminf \alpha _{n}(1-\alpha _{n})>0,$ then $\{x_{n}\}$ converges
weakly to a point $q\in A(T).$ Moreover, $q=\lim_{n\rightarrow \infty }Px_{n%
\text{ }}$where $P$\ is the projection of $H$ onto $A(T).$
\end{corollary}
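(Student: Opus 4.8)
The plan is to obtain this corollary as the one-mapping specialization of Theorem \ref{Main}, taking $S = T$. First I would observe that when $S = T$ the two-mapping process (\ref{S1}) collapses exactly onto the Picard--Mann hybrid process (\ref{Safeer}): the auxiliary step $y_{n} = (1-\alpha_{n})x_{n} + \alpha_{n}Tx_{n}$ is unchanged, and $x_{n+1} = Sy_{n}$ becomes $x_{n+1} = Ty_{n}$. Thus the sequence $\{x_{n}\}$ generated here is literally the sequence to which Theorem \ref{Main} applies once the second mapping is chosen equal to $T$.

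Next I would verify that every hypothesis of Theorem \ref{Main} is in force. Since $H$, $C$, $T$ and the parameters $\alpha, \beta, \gamma, \delta, \varepsilon$ are taken as in Theorem \ref{Main}, the structural conditions $\alpha + \beta + \gamma + \delta \geq 0$, $\varepsilon \geq 0$, and either $\alpha + \beta > 0$ or $\alpha + \gamma > 0$, together with the step-size condition $\liminf \alpha_{n}(1-\alpha_{n}) > 0$, hold automatically. The only hypothesis that must be read off through the specialization is the nonemptiness requirement: in Theorem \ref{Main} one assumes $CAP(S,T) \neq \varnothing$, and by the remark recorded in the introduction, $CAP(T,T) = A(T)$. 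Hence, with $S = T$, the standing hypothesis $CAP(S,T) \neq \varnothing$ is precisely $A(T) \neq \varnothing$, which is subsumed in ``as in Theorem \ref{Main}.''

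With these observations in place the conclusion is immediate: Theorem \ref{Main} yields that $\{x_{n}\}$ converges weakly to some $q \in CAP(T,T)$ and that $q = \lim_{n\to\infty} Px_{n}$, where $P$ is the projection of $H$ onto $CAP(T,T)$. Rewriting $CAP(T,T) = A(T)$ throughout gives exactly the asserted weak convergence to a point $q \in A(T)$ together with the identification $q = \lim_{n\to\infty} Px_{n}$ for $P$ the projection onto $A(T)$. Because this is a direct specialization, I do not anticipate any genuine obstacle; the only point deserving care is the bookkeeping that $CAP(S,T)$ degenerates to $A(T)$ under $S = T$, so that the attractive-point set and its metric projection are correctly interpreted in the conclusion. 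If a fully self-contained argument were preferred, one could instead repeat the proof of Theorem \ref{Main} verbatim with $S$ replaced by $T$, but the specialization route is cleaner and avoids redundancy.
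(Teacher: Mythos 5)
Your proposal is correct and is exactly the paper's own argument: the paper proves this corollary in one line by choosing $S=T$ in Theorem \ref{Main}, relying (as you do) on the facts that $(\ref{S1})$ reduces to $(\ref{Safeer})$ and that $CAP(T,T)=A(T)$. Your version merely spells out the bookkeeping that the paper leaves implicit.
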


\begin{proof}
Choose $S=T$ in the above theorem.
\end{proof}

\begin{corollary}
\label{I}Let $\ H,C,\ T$ and $\alpha ,\beta ,\gamma ,\delta ,\varepsilon $
be as in Theorem $\ref{Main}.$ If $\{x_{n}\}$ is defined by Mann itertive
process $(\ref{Mann}),$where $\{\alpha _{n}\}$ is a sequence in $(0,1)$ with 
$\liminf \alpha _{n}(1-\alpha _{n})>0,$ then $\{x_{n}\}$ converges weakly to
a point $q\in A(T).$ Moreover, $q=\lim_{n\rightarrow \infty }Px_{n\text{ }}$%
where $P$\ is the projection of $H$ onto $A(T).$
\end{corollary}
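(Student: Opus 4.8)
The plan is to realize the Mann process $(\ref{Mann})$ as the special case $S=I$ of the two-mapping process $(\ref{S1})$ and then transport the conclusion of Theorem $\ref{Main}$ to this setting. Substituting $S=I$ into $(\ref{S1})$ gives $x_{n+1}=Iy_{n}=y_{n}=(1-\alpha_{n})x_{n}+\alpha_{n}Tx_{n}$, which is precisely $(\ref{Mann})$; so the two iterations literally coincide and it suffices to run the argument of Theorem $\ref{Main}$ with $S=I$.

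The key observation is that $A(I)=H$: for every $z\in H$ one has $\Vert Iw-z\Vert=\Vert w-z\Vert\le\Vert w-z\Vert$ for all $w\in C$, so the attractive-point condition for $I$ is vacuous. Consequently $CAP(I,T)=A(I)\cap A(T)=A(T)$, which is exactly the target set of the statement. Inspecting the proof of Theorem $\ref{Main}$, the mapping $S$ enters in only two places: first, in the estimate $\Vert x_{n+1}-z\Vert=\Vert Sy_{n}-z\Vert\le\Vert y_{n}-z\Vert$, which for $S=I$ holds with equality; and second, in the step that establishes $q\in A(S)$, which is automatic here since $A(I)=H$. Every remaining ingredient — the monotonicity $(\ref{increasing})$, the limit $\lim_{n\to\infty}\Vert Tx_{n}-x_{n}\Vert=0$, the Banach-limit computation yielding $q\in A(T)$, the uniqueness of the weak subsequential limit, and the identification $q=\lim_{n\to\infty}Px_{n}$ through Lemma $\ref{TT}$ — involves only $T$ and carries over verbatim.

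The one point that deserves care, and which I expect to be the main obstacle, is that $I$ need not be a further generalized hybrid mapping with the prescribed parameters: substituting $S=I$ into $(\ref{sgm})$ reduces it to $(\alpha+\beta+\gamma+\delta)\Vert x-y\Vert^{2}\le 0$, so unless $\alpha+\beta+\gamma+\delta=0$ the identity fails $(\ref{sgm})$ for these $\alpha,\beta,\gamma,\delta,\varepsilon$. Hence one cannot literally invoke Theorem $\ref{Main}$ as a black box by ``choosing $S=I$''. The resolution is the observation above: the further-generalized-hybrid hypothesis on $S$ is used in that proof solely to deduce $q\in A(S)$, and for $S=I$ this is free because $A(I)=H$. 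I would therefore reproduce the proof of Theorem $\ref{Main}$ with $S=I$, at each $S$-dependent step replacing the further-generalized-hybrid input by the trivial identities $\Vert Iw-z\Vert=\Vert w-z\Vert$ and $A(I)=H$, to conclude that $x_{n}\rightharpoonup q\in A(T)$ and $q=\lim_{n\to\infty}Px_{n}$, where $P$ is the projection of $H$ onto $A(T)$.
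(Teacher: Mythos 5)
Your proposal follows exactly the paper's route: the paper's entire proof of this corollary is the single line ``Choose $S=I$ in the above theorem.'' What you add, and what the paper glosses over, is the observation that this cannot be a literal black-box application of Theorem \ref{Main}: substituting $S=I$ into $(\ref{sgm})$ forces $(\alpha+\beta+\gamma+\delta)\left\Vert x-y\right\Vert ^{2}\leq 0$, which under the standing hypothesis $\alpha +\beta +\gamma +\delta \geq 0$ holds only when $\alpha +\beta +\gamma +\delta =0$ (recall the paper fixes the same parameters for both mappings). So the identity need not belong to the hypothesis class of Theorem \ref{Main}, and your repair --- re-running the proof of Theorem \ref{Main} and noting that $S$ enters only through $\left\Vert Sy_{n}-z\right\Vert \leq \left\Vert y_{n}-z\right\Vert $ (an equality for $S=I$, since $z\in A(T)=CAP(I,T)$) and through the step $q\in A(S)$ (vacuous, since $A(I)=H$) --- is exactly what is needed to make the corollary rigorous. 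Your version is therefore the same reduction as the paper's but with a genuine gap in the paper's one-line justification identified and filled; the argument as you outline it is correct.
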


\begin{proof}
Choose $S=I$ in the above theorem.
\end{proof}

Now we give some remarks on how our above results are generalizations and
improvements of the results in the existing literature.

\begin{remark}
\begin{enumerate}
\item Theorem 5.1 of \cite{TWY} can now be obtained by choosing either $%
S=I,\varepsilon =0$ in Theorem $\ref{Main}$ or $\varepsilon =0$ in Corollary 
$\ref{I}.$

\item Corollary $\ref{I}$ can be viewed as an improvement and extension of
Theorem 8 of \cite{GT} in the sense that (i)our class of mappings is simpler
(ii)it contains the class of quasi nonexpansive mappings as opposed to \cite%
{GT}. Corollary $\ref{T}$ not only keeps this sense but also gives faster
convergence (see \cite{SHK}).

\item Corollary $\ref{T}$ (leave alone our Theorem $\ref{Main}$) generalizes
Corollary $4.3$ of Zheng \cite{Z} in two ways: We do not need closedness of $%
C$ and the class of our mappings is much more general than that of \cite{Z}.

\item Of course, all corresponding results generalized in \cite{TWY} and 
\cite{GT} are part and parcel of the above remarks.
\end{enumerate}
\end{remark}

If, in addition, we use the closedness of $C$ in Theorem $\ref{Main}$, then
we have the following:

\begin{theorem}
Let $H$ be a real Hilbert space and let $C$ be a nonempty closed convex
subset of $H$. Let $S,T:C\rightarrow C$ be two further generalized hybrid
mappings as defined in $(\ref{sgm})$ which satisfy $\alpha +\beta +\gamma
+\delta \geq 0,\varepsilon \geq 0$ and either $\alpha +\beta >0$ or $\alpha
+\gamma >0.$ Let $CAP(S,T)\neq \varnothing .$ If $\{x_{n}\}$ is defined by $(%
\ref{S1}),$ where $\{\alpha _{n}\}$ is a sequence in $(0,1)$ with $\liminf
\alpha _{n}(1-\alpha _{n})>0,$ then $\{x_{n}\}$ converges weakly to a point $%
P_{C}q\in F(S)\cap F(T)$ where $P_{C}:H\rightarrow C$ the metric projection
\end{theorem}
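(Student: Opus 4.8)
The plan is to deduce this theorem directly from Theorem \ref{Main} and Lemma \ref{1}, the new hypothesis being only that $C$ is now closed. First I would apply Theorem \ref{Main} verbatim: since a closed convex set is in particular convex, all of its hypotheses are in force, so the sequence $\{x_{n}\}$ generated by $(\ref{S1})$ converges weakly to some $q\in CAP(S,T)$, and moreover $q=\lim_{n\rightarrow \infty }Px_{n}$ with $P$ the projection onto $CAP(S,T)$. Thus the weak convergence is already in hand; what remains is to locate the limit inside $F(S)\cap F(T)$ and to express it through the metric projection $P_{C}$.

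The essential new ingredient is the observation that $\{x_{n}\}\subseteq C$. This follows by an easy induction from $(\ref{S1})$: we have $x_{1}=x\in C$, and if $x_{n}\in C$ then $Tx_{n}\in C$ because $T:C\rightarrow C$, so $y_{n}=(1-\alpha _{n})x_{n}+\alpha _{n}Tx_{n}\in C$ by convexity of $C$, whence $x_{n+1}=Sy_{n}\in C$ because $S:C\rightarrow C$. Since $C$ is closed and convex it is weakly closed, and therefore the weak limit $q$ of $\{x_{n}\}$ must itself lie in $C$. Consequently $P_{C}q=q$.

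Finally I would invoke Lemma \ref{1}. Because $q\in CAP(S,T)$, that lemma yields $P_{C}q\in F(S)\cap F(T)$; combining this with $P_{C}q=q$ gives $q\in F(S)\cap F(T)$. Hence $\{x_{n}\}$ converges weakly to $q=P_{C}q\in F(S)\cap F(T)$, which is exactly the assertion.

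The argument is genuinely short, and the only step needing care — the main conceptual point — is the passage to $q\in C$, which rests on the standard fact that a closed convex subset of a Hilbert space is weakly closed (Mazur's theorem). This is precisely where the extra hypothesis enters: without closedness of $C$ the weak limit need not return to $C$, and that is why the weaker Theorem \ref{Main} can only place the limit in $CAP(S,T)\subseteq H$ rather than in $F(S)\cap F(T)$.
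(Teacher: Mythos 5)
Your proposal is correct and follows essentially the same route as the paper: apply Theorem \ref{Main} to obtain weak convergence to some $q\in CAP(S,T)$, then invoke Lemma \ref{1} to place $P_{C}q$ in $F(S)\cap F(T)$. In fact your write-up is slightly more complete than the paper's two-line proof, since you justify the identification $q=P_{C}q$ (via the induction showing $\{x_{n}\}\subseteq C$ and the weak closedness of the closed convex set $C$), a step the paper silently omits but which is needed for the conclusion that the weak limit is literally the point $P_{C}q$.
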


\begin{proof}
By Theorem $\ref{Main},q\in CAP(S,T).$ Now using Lemma $\ref{1},P_{C}q\in
F(S)\cap F(T)$ as desired.
\end{proof}

\noindent \textbf{Competing interests.} \textquotedblleft The author(s)
declare(s) that they have no competing interests\textquotedblright .

\begin{acknowledgement}
I remain grateful to my PhD advisor Professor\ Wataru Takahashi for what I\
have learnt from him throughout my career.
\end{acknowledgement}

\end{document}